\documentclass[10pt]{article}
\usepackage{amssymb}
\usepackage{amsmath}
\usepackage{amsfonts}
\usepackage{amsthm}
\usepackage{graphics,graphicx}
\usepackage{float}
\usepackage{mathtools}
\usepackage[dvipsnames]{xcolor}
\usepackage{tkz-graph}
\usetikzlibrary{shapes, arrows.meta, positioning}

\usepackage[english]{babel}
\usepackage{authblk}

\sloppy
\newtheorem{theorem}{Theorem}
\newtheorem{lemma}{Lemma}
\newtheorem{proposition}{Proposition}
\newtheorem{corollary}{Corollary}
\newtheorem{example}{Example}

\newtheorem{problem}{Problem}
\newtheorem{o-problem}{Open problem}

\title{On $P$-crucial square-free permutations}
\author[1]{Alexandr Valyuzhenich \thanks{Postdoctoral Research Station of Mathematics, Hebei Normal University, Shijiazhuang, China; Email address: graphkiper@mail.ru}}


\date{}
\begin{document}
\maketitle

\begin{abstract}
A permutation is square-free if it does not contain two consecutive factors of length two or more that are order-isomorphic.
A square-free permutation of length $n$ is $P$-crucial, where $P$ is a subset of $\{0,1,\ldots,n\}$, 
if any of its extensions in any position from the set $P$ contains a square.
In 2015, Gent, Kitaev, Konovalov, Linton and Nightingale initiated the study of $P$-crucial square-free permutations.
In particular, they showed that $\{0,1,n-1,n\}$-crucial square-free permutations of length $n$, where $n\leq 22$, exist if and only if $n=17$ or $n=21$.
In this work, we prove that for any $m\geq 2$ there exists a $\{0,1,8m+4,8m+5\}$-crucial square-free permutation of length $8m+5$.
\end{abstract}

\textbf{Keywords:} $P$-crucial permutation, square-free permutation, bicrucial permutation, right-crucial permutation

\textbf{AMS classification:} 68R15, 05A05

\section{Introduction}\label{Sec:Intro}

In 2011, Avgustinovich et al. \cite{AKPV11} introduced the concept of a square-free permutation.
They proved that the number of square-free permutations of length $n$ is $n^{n(1-\varepsilon_n)}$ where $\varepsilon_n \rightarrow 0$ when $n \rightarrow \infty$.
In the same paper, Avgustinovich et al. initiated the study of right-crucial and bicrucial square-free permutations.
In particular, they proved that there exist arbitrarily long bicrucial square-free permutations.
These investigations were continued in \cite{GKKLN15,GJ22}.
In particular, Groenland and Johnston \cite{GJ22} finished the classification of possible lengths of bicrucial square-free permutations.
Recently, Akhmejanova et al. \cite{AKVV25} proved that for any $k\geq 3$ there exist arbitrarily long bicrucial $k$-power-free permutations.
In 2015, Gent, Kitaev, Konovalov, Linton and Nightingale \cite{GKKLN15} initiated the study of $P$-crucial square-free permutations.
They showed that $\{0,1,n-1,n\}$-crucial square-free permutations of length $n$, where $n\leq 22$, exist if and only if $n=17$ or $n=21$.
In this work, we prove that for any $m\geq 2$ there exists a $\{0,1,8m+4,8m+5\}$-crucial square-free permutation of length $8m+5$.

The paper is organized as follows.
In Section \ref{Sec:Definitions}, we introduce basic definitions.
In Section \ref{Sec:Prelim}, we give preliminary results.
In Section \ref{Sec:Main}, we prove that for any $m\geq 2$ there exists a $\{0,1,8m+4,8m+5\}$-crucial square-free permutation of length $8m+5$.
In Section \ref{Sec:S-crucial}, we discuss the interconnection between S-crucial and $\{0,1,n-1,n\}$-crucial square-free permutations.
In particular, we show that these two classes of permutations do not coincide.

\section{Basic definitions}\label{Sec:Definitions}
A {\em permutation} of length $n$ is a word of length $n$ whose symbols are distinct integers.
For example, $132$ and $594$ are permutations of length $3$.
We say that a permutation $P$ is a permutation of the set $X$ if the set of symbols of $P$ is $X$.
A permutation $F$ is a {\em factor} of a permutation $P$ if $P=P_1FP_2$ for some permutations $P_1$ and $P_2$, where $P_1$ and $P_2$ may be empty.
For example, $364$ and $425$ are factors of the permutation $1364257$.
Note that any factor of a permutation is also a permutation.
A permutation $F$ is a {\em suffix} of a permutation $P$ if $P=P_1F$ for some permutation $P_1$.
A permutation $F$ is a {\em prefix} of a permutation $P$ if $P=FP_2$ for some permutation $P_2$.
For a permutation $P=p_1\ldots p_n$ and positive integers $i$ and $j$, where $1\leq i\leq j\leq n$, 
denote by $P[i,j]$ the factor $p_ip_{i+1}\ldots p_j$.

For a positive integer $n$, define $[n]$ by $[n] = \{1,\ldots,n\}$.
Two permutations $p_1\ldots p_n$ and $q_1\ldots q_n$ are called {\em order-isomorphic} 
if $(p_i-p_j)(q_i-q_j)>0$ for all $i,j\in [n]$, $i\neq j$.
For example, $1324$ is order-isomorphic to $2648$. 
Note that this relation is an equivalence relation on the set of all permutations of a fixed length.
We denote this equivalence by $\sim$.
A {\em square} is a permutation of the form $X_1X_2$, where $X_1 \sim X_2$ and $|X_1| \geq 2$.
For example, the permutation $1324$ is a square.
We say that a permutation $P$ {\em contains} a square if $P=P_1XP_2$, where $P_1$ and $P_2$ are permutations and $X$ is a square.
A permutation is called {\em square-free} if it does not contain squares.
For example, the permutation $1364257$ is square-free.
Various generalizations of squares in permutations can be found in \cite{DGR21}.

Let $P$ be a permutation of length $n$ and let $i\in \{0,1,\ldots,n\}$.
A permutation $Q$ of length $n+1$ is an {\em extension} of $P$ in position $i$
if the permutation obtained from $Q$ by deleting the ($i+1$)-th symbol is order-isomorphic to $P$.
For example, $1324$ is an extension of the permutation $123$ in position $2$.

A square-free permutation of length $n$ is called {\em $P$-crucial}, where $P$ is a subset of $\{0,1,\ldots,n\}$, 
if for every $i\in P$ any of its extensions in position $i$ contains a square.
Left-crucial, right-crucial and bicrucial square-free permutations can be defined as $P$-crucial permutations in the following way.
A square-free permutation is called {\em left-crucial} if it is $P$-crucial with $P=\{0\}$.
A square-free permutation of length $n$ is called {\em right-crucial} if it is $P$-crucial with $P=\{n\}$.
A square-free permutation of length $n$ is called {\em bicrucial} if it is $P$-crucial with $P=\{0,n\}$.
Other types of crucial and bicrucial permutations can be found in \cite{AKV12,AKT23,C24}.
A square-free permutation of length $n$ is called {\em S-crucial} if it is $P$-crucial with $P=\{0,1,\ldots,n\}$.

For integers $k$ and $m$, where $k<m$, denote $[k,m] = \{k,k+1,\ldots,m\}$.

\section{Preliminaries}\label{Sec:Prelim}

\textbf{Levels in permutations.}
Let $P=p_1\ldots p_n$ be a permutation that does not contain squares of length $4$.
Then there exists $i\in \{0,1,2,3\}$ such that for every integer $t$ and for all valid indices the inequalities

\begin{equation}\label{Eq:1}
p_{i+4t}<p_{i+4t\pm 1} \textrm{ and } p_{i+4t+2}>p_{i+4t+2\pm 1}
\end{equation}
hold (see \cite{AKPV11,GKKLN15}). 
For a permutation $P$ satisfying (\ref{Eq:1}), 
we say that the symbols of $P$ with the indices $4t+i$ form the {\em lower} level,
the symbols of $P$ with the indices $4t+i\pm 1$ form the {\em medium} level, and
the symbols of $P$ with the indices $4t+i+2$ form the {\em upper} level.
For example, the permutation $2463157$ satisfies (\ref{Eq:1}) for $i=1$.
The symbols $2$ and $1$ form its lower level, the symbols $6$ and $7$ form its upper level, and the symbols $4$, $3$ and $5$ form its medium level 
(see Figure \ref{fig:levels} below).

\begin{figure}[h!]
\centering
\begin{tikzpicture}[scale=0.63]
    \node (A1) at (6,0.5) [circle,fill,inner sep=1.5pt,label=below left:1] {};
    \node (A2) at (2,1) [circle,fill,inner sep=1.5pt,label=below left:2] {};
    \node (A3) at (5,1.5) [circle,fill,inner sep=1.5pt,label=above right:3] {};
    \node (A4) at (3,2) [circle,fill,inner sep=1.5pt,label=above left:4] {};
    \node (A5) at (7,2.5) [circle,fill,inner sep=1.5pt,label=above left:5] {};
    \node (A6) at (4,3) [circle,fill,inner sep=1.5pt,label=above right:6] {};
    \node (A7) at (8,3.5) [circle,fill,inner sep=1.5pt,label=above right:7] {};

    \draw (A2) -- (A6);
    \draw (A6) -- (A3);
    \draw (A3) -- (A1);
    \draw (A1) -- (A5);
    \draw (A5) -- (A7);
\end{tikzpicture}
\caption{A diagram of the permutation $2463157$.}
\label{fig:levels}
\end{figure}
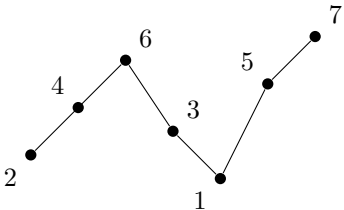

\textbf{High-medium-low construction.} 
Now we briefly recall one construction of square-free permutations proposed by Avgustinovich et al. in \cite{AKPV11}.
Let $P$ be a permutation satisfying the following conditions:

(S1) $P$ satisfies (\ref{Eq:1}) for some $i\in \{0,1,2,3\}$, 

(S2) any symbol from the lower level is less than any symbol from the medium level and any symbol from the medium level is less than any symbol from the upper level,  

(S3) symbols from the medium level form a square-free permutation.

Avgustinovich et al. \cite{AKPV11} proved that all permutations satisfying (S1)-(S3) are square-free.

\begin{lemma}[\cite{AKPV11}, Lemma 1]\label{L:H-M-L}
Any permutation constructed by the high-medium-low construction is square-free.
\end{lemma}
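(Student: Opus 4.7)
I would argue by contradiction. Suppose $P=p_1\ldots p_n$ contains a square $X_1X_2$ with $X_1\sim X_2$ and $|X_1|=|X_2|=k\geq 2$; write $X_1=P[j,j+k-1]$ and $X_2=P[j+k,j+2k-1]$. The plan is first to constrain $k$ using (S1), and then to use (S2) to transfer the square down to the subsequence of medium-level symbols, obtaining a contradiction with (S3).

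\textbf{Step 1: forcing $4\mid k$.} Let $c_s\in\{U,D\}$ record whether $p_{s+1}-p_s$ is positive or negative. Condition (\ref{Eq:1}) forces the sequence $c_1c_2c_3\ldots$ to be periodic of period $4$ with repeating block a cyclic shift of $UUDD$ (the shift depending on the offset $i$). From $X_1\sim X_2$ the ascent/descent patterns of $X_1$ and $X_2$ must agree, so $c_{j+s}=c_{j+k+s}$ for $0\leq s\leq k-2$. For $k\in\{2,3\}$ I would check directly against the $UUDD$-period that no such agreement is possible (e.g.\ any two consecutive entries of the $c$-sequence differ from their shift by $2$ or by $3$). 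For $k\geq 4$ the strings being compared have length $\geq 3$ and, once a full period of four consecutive entries is spanned, the equality $c_{j+s}=c_{j+k+s}$ forces $4\mid k$. Writing $k=4m$ with $m\geq 1$ completes this step.

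\textbf{Step 2: dropping to the medium subsequence.} Since $k=4m$, shifting indices by $k$ preserves the level (lower/medium/upper) of each position, so the medium positions of $X_1$ correspond under the shift to the medium positions of $X_2$. By (S2) the medium values form an interval of ranks strictly between the lower and upper ranks, so under the order-isomorphism $X_1\sim X_2$ medium positions are forced to match medium positions, and the restriction of the isomorphism to these positions is itself an order-isomorphism of sequences of length $2m$. Because the medium positions of $P$ form a subsequence $M$ obtained by taking every other entry, the medium entries of $X_1$ followed by those of $X_2$ are $4m$ consecutive entries of $M$. Hence $M$ contains a square $Y_1Y_2$ with $|Y_1|=|Y_2|=2m\geq 2$, contradicting (S3).

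\textbf{Main obstacle.} The hardest part will be the divisibility argument in Step 1: verifying carefully that the exceptional cases $k\in\{2,3\}$ cannot accidentally produce agreement of the $UUDD$-pattern, and that for $k\geq 4$ a full period is necessarily spanned so that agreement forces $4\mid k$. Once $k=4m$ is established, the value-interval separation in (S2) together with the $4$-periodicity of the level structure makes the reduction to a medium-level square essentially automatic, and (S3) delivers the contradiction.
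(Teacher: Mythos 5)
The paper does not actually prove this lemma; it is imported verbatim as Lemma~1 of \cite{AKPV11}. Your argument is correct and is essentially the standard proof from that reference: condition (S1) makes the ascent/descent sequence of $P$ periodic with block $UUDD$, which rules out $|X_1|\in\{2,3\}$ and forces $4\mid |X_1|$ for any square $X_1X_2$, and then the medium-level entries of $X_1$ and $X_2$ (which correspond position-by-position because the shift is a multiple of $4$) form a square of length at least $4$ inside the medium permutation, contradicting (S3). One phrasing quibble: for $k=4$ the compared windows have length $3$ and do not span a full period, so the ``agreement forces $4\mid k$'' step does not apply there — but $4\mid 4$ holds anyway, so the conclusion stands.
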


In what follows, we will need the following two special cases of the high-medium-low construction.

\textbf{Construction 1.}
Let $m\geq 1$.
Let $A=a_1\ldots a_m$ and $C=c_1\ldots c_m$ be permutations of length $m$ and let $B=b_1\ldots b_{2m-1}$ be a permutation of length $2m-1$.
Suppose that $A$, $B$ and $C$ satisfy the following conditions:

(A1) any symbol of $A$ is less than any symbol of $B$ and any symbol of $B$ is less than any symbol of $C$,

(A2) $B$ is square-free.

We define a permutation $P=p_1 \ldots p_{4m-1}$ of length $4m-1$ as follows:

$$
p_i=\begin{cases}
a_{\frac{i+3}{4}},&\text{if $i \equiv 1\bmod 4$;}\\
c_{\frac{i+1}{4}},&\text{if $i \equiv 3\bmod 4$;}\\
b_{\frac{i}{2}},&\text{if $i$ is even.}
\end{cases}
$$

For example, for $m=1$ and $m=2$ we get the permutations $a_1b_1c_1$ and $a_1b_1c_1b_2a_2b_3c_2$ respectively.
Taking into account (A1), we see that $P$ satisfies (\ref{Eq:1}) for $i=1$.
Therefore, $P$ satisfies (S1).
The symbols $a_1,\ldots,a_m$ form the lower level of $P$, the symbols $b_1,\ldots,b_{2m-1}$ form the medium level of $P$, 
and the symbols $c_1,\ldots,c_m$ form the upper level of $P$. 
Since $A$, $B$ and $C$ satisfy (A1) and (A2), $P$ satisfies (S2) and (S3).
Thus, $P$ satisfies (S1)-(S3).
Therefore, by Lemma \ref{L:H-M-L}, $P$ is square-free.
Thus, we have the following.

\begin{corollary}\label{Cor:Construction-1}
Any permutation constructed by Construction 1 is square-free.
\end{corollary}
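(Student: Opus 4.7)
The plan is to derive this directly from Lemma \ref{L:H-M-L} by checking that the permutation $P$ built by Construction 1 satisfies each of the three hypotheses (S1)--(S3) of the high-medium-low construction. There is no need for any combinatorial argument beyond verifying the index bookkeeping, since Lemma \ref{L:H-M-L} already encapsulates all the heavy lifting. So the whole proof reduces to reading off the levels of $P$ from its defining piecewise formula and matching them with the assumptions (A1), (A2) on $A$, $B$, $C$.

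First I would verify (S1) with $i=1$. The defining cases of $P$ say that the symbols at positions $\equiv 1 \pmod 4$ are the $a_j$'s, those at positions $\equiv 3 \pmod 4$ are the $c_j$'s, and those at even positions are the $b_j$'s. Writing this as $p_{4t+1}=a_{t+1}$, $p_{4t+3}=c_{t+1}$, and $p_{2s}=b_s$, I would combine with (A1) (every $a_j$ is smaller than every $b_k$, and every $b_k$ is smaller than every $c_j$) to conclude that $p_{4t+1}<p_{4t+1\pm 1}$ and $p_{4t+3}>p_{4t+3\pm 1}$ at all valid indices, which is exactly (\ref{Eq:1}) for $i=1$. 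In particular, this identifies the $a_j$'s as the lower level, the $b_j$'s as the medium level, and the $c_j$'s as the upper level.

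Once the levels are identified, (S2) is immediate from (A1) since the three blocks of values $A<B<C$ are separated by assumption, and (S3) is immediate from (A2) since the medium level, read in order of increasing position, is precisely $B$. With (S1)--(S3) in hand, Lemma \ref{L:H-M-L} then yields that $P$ is square-free. The ``hard part'' is essentially trivial here: the only point requiring care is the mod-$4$ case analysis of the indexing, and in particular checking that the formula really does place $B$ on the medium level without any off-by-one issues at the boundary (the last symbol is $c_m$ at position $4m-1$, which is consistent with $|B|=2m-1$ and $|A|=|C|=m$).
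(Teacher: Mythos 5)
Your proposal is correct and follows exactly the paper's own argument: the paper likewise establishes the corollary by reading off the levels from the piecewise definition, checking (S1) with $i=1$ via (A1), deducing (S2) and (S3) from (A1) and (A2), and invoking Lemma \ref{L:H-M-L}. The index bookkeeping you spell out ($p_{4t+1}=a_{t+1}$, $p_{4t+3}=c_{t+1}$, $p_{2s}=b_s$) is accurate.
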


\textbf{Construction 2.}
Let $m\geq 1$.
Let $U=u_1\ldots u_m$ and $W=w_1\ldots w_m$ be permutations of length $m$ and let $V=v_1\ldots v_{2m-1}$ be a permutation of length $2m-1$.
Suppose that $U$, $V$ and $W$ satisfy the following conditions:

(B1) any symbol of $U$ is greater than any symbol of $V$ and any symbol of $V$ is greater than any symbol of $W$,

(B2) $V$ is square-free.

We define a permutation $P=p_1 \ldots p_{4m-1}$ of length $4m-1$ as follows:

$$
p_i=\begin{cases}
u_{\frac{i+3}{4}},&\text{if $i \equiv 1\bmod 4$;}\\
w_{\frac{i+1}{4}},&\text{if $i \equiv 3\bmod 4$;}\\
v_{\frac{i}{2}},&\text{if $i$ is even.}
\end{cases}
$$

For example, for $m=1$ and $m=2$ we get the permutations $u_1v_1w_1$ and $u_1v_1w_1v_2u_2v_3w_2$ respectively.
Taking into account (B1), we see that $P$ satisfies (\ref{Eq:1}) for $i=3$.
Therefore, $P$ satisfies (S1).
The symbols $u_1,\ldots,u_m$ form the upper level of $P$, the symbols $v_1,\ldots,v_{2m-1}$ form the medium level of $P$, 
and the symbols $w_1,\ldots,w_m$ form the lower level of $P$. 
Since $U$, $V$ and $W$ satisfy (B1) and (B2), $P$ satisfies (S2) and (S3).
Thus, $P$ satisfies (S1)-(S3).
Therefore, by Lemma \ref{L:H-M-L}, $P$ is square-free.
Thus, we have the following.

\begin{corollary}\label{Cor:Construction-2}
Any permutation constructed by Construction 2 is square-free.
\end{corollary}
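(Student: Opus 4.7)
The plan is to verify the three conditions (S1), (S2), (S3) of the high-medium-low construction for the permutation $P$ produced by Construction 2, and then invoke Lemma \ref{L:H-M-L} to conclude that $P$ is square-free. Since the argument is entirely parallel to the one already carried out for Construction 1, the work consists of checking that the ``flipped'' role of $U$, $V$, $W$ (with $U$ now the upper level rather than the lower level, as in Construction 1) still matches (\ref{Eq:1}) for a suitable value of $i$.

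First I would check (S1) by showing that (\ref{Eq:1}) holds with $i = 3$. Under Construction 2, positions $\equiv 1 \pmod{4}$ carry the symbols $u_1,\ldots,u_m$, positions $\equiv 3 \pmod{4}$ carry $w_1,\ldots,w_m$, and even positions carry $v_1,\ldots,v_{2m-1}$. With $i = 3$, the positions $3+4t$ range through $3, 7, \ldots, 4m-1$, which are precisely the $w$-positions, and their neighbours are even positions carrying $v$-symbols; by (B1), $w_j < v_\ell$ for all valid $j,\ell$, so $p_{3+4t} < p_{3+4t \pm 1}$. Similarly, the positions $3+4t+2 = 5+4t$ (ranging over $1, 5, \ldots, 4m-3$) carry $u$-symbols, and their neighbours carry $v$-symbols; by (B1) again, $u_j > v_\ell$, so $p_{3+4t+2} > p_{3+4t+2 \pm 1}$. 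Thus (\ref{Eq:1}) holds for $i=3$.

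Once (S1) is established with $i=3$, the identification of the three levels is immediate: the lower level consists of the symbols at positions $3+4t$, namely $w_1,\ldots,w_m$; the upper level consists of the symbols at positions $5+4t$, namely $u_1,\ldots,u_m$; and the medium level consists of all symbols at even positions, namely $v_1,\ldots,v_{2m-1}$. Condition (B1) then delivers (S2) directly, since it asserts exactly $W < V < U$ as sets of values. Condition (B2), the square-freeness of $V = v_1\ldots v_{2m-1}$, is precisely (S3).

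Having verified (S1)--(S3), Lemma \ref{L:H-M-L} applies and shows that $P$ is square-free, proving the corollary. There is no real obstacle here; the only thing to be careful about is the index arithmetic in step one, making sure the shift by $2$ in (\ref{Eq:1}) lines up the $u$-positions (now upper) with the pattern $p_{i+4t+2} > p_{i+4t+2 \pm 1}$, which is the opposite convention from Construction 1 where the $a$-positions (lower) played that role with $i=1$.
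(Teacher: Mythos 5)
Your proposal is correct and follows exactly the paper's own argument: the paper likewise verifies that $P$ satisfies (\ref{Eq:1}) with $i=3$ using (B1), identifies $U$, $V$, $W$ with the upper, medium and lower levels respectively, deduces (S1)--(S3) from (B1) and (B2), and concludes via Lemma \ref{L:H-M-L}. Your index bookkeeping (the $w$-positions $3+4t$ as the lower level and the $u$-positions $5+4t\equiv 1 \pmod 4$ as the upper level) is accurate.
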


We also need the following simple fact.

\begin{proposition}\label{P:H-M-L-Factors}
Let $P$ be a permutation constructed by the high-medium-low construction. 
Then no factor of $P$ of length $4$ is order-isomorphic to $2341$, $3214$, $4123$ and $1432$.  
\end{proposition}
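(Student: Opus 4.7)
The plan is to exploit the level structure guaranteed by (S1)--(S2) and simply enumerate the order-isomorphism types that four consecutive positions can realize.

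First, I would observe that by (\ref{Eq:1}), the level labels (lower, medium, upper) of the positions of $P$ form a periodic sequence of period~$4$, namely $\ldots L\,M\,U\,M\,L\,M\,U\,M\,\ldots$, with the offset determined by the value of $i$ in (S1). Hence a factor $p_j p_{j+1} p_{j+2} p_{j+3}$ of length~$4$ falls into exactly one of four level patterns: $L\,M\,U\,M$, $M\,U\,M\,L$, $U\,M\,L\,M$, or $M\,L\,M\,U$.

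Next I would invoke (S2): every lower symbol is strictly smaller than every medium symbol, and every medium symbol is strictly smaller than every upper symbol. Consequently, in any of the four patterns above, the unique $L$-entry is the smallest of the four values and the unique $U$-entry is the largest; the only remaining ambiguity is the relative order of the two medium entries. This yields at most $4 \times 2 = 8$ possible order-isomorphism types. Writing them out gives the list $1243$, $1342$ (from $L\,M\,U\,M$); $2431$, $3421$ (from $M\,U\,M\,L$); $4213$, $4312$ (from $U\,M\,L\,M$); $2134$, $3124$ (from $M\,L\,M\,U$).

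Finally, I would just check by inspection that none of the four forbidden patterns $2341$, $3214$, $4123$, $1432$ appears in this list, which completes the proof. There is no real obstacle here: the result is essentially a finite case check once one sees that (S1) fixes the level pattern of any four consecutive positions up to a cyclic shift, and (S2) collapses all numerical freedom down to the single binary choice of the order of the two medium entries.
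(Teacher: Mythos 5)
Your proof is correct and rests on exactly the same two facts as the paper's argument: the period-$4$ level pattern $\ldots LMUML MUM\ldots$ forced by (S1), and the strict separation of levels given by (S2). The paper argues by contradiction for each forbidden pattern separately (deducing the levels of $x,y,z,t$ from the assumed pattern), whereas you enumerate all eight realizable order types of a length-$4$ window at once; this is only a cosmetic difference (your version even yields slightly more, namely the exact list of possible types), so the two approaches are essentially the same.
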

\begin{proof}
Let $F=xyzt$ be a factor of $P$ such that $F\sim 2341$.
Since $x<y<z$, we know that $x$, $y$ and $z$ belong to the lower, medium and upper levels of $P$ respectively.
This implies that $t$ belongs to the medium level of $P$. Therefore, $t>x$, i.e. $F \not\sim 2341$.
The proofs for other permutations are similar.
\end{proof}

In what follows, we will need the following two lemmas.

\begin{lemma}\label{L:Modifications}
The following statements hold:

\begin{enumerate}
  
  \item Let $P_1$ be a permutation constructed by Construction $1$.
  Let $x$ and $y$ be two integers such that $x>a>y$ for any symbol $a$ of $P_1$.
  Then the permutation $xP_1y$ is square-free.
  
  \item Let $P_2$ be a permutation constructed by Construction $2$.
  Let $z$ and $t$ be two integers such that $z<b<t$ for any symbol $b$ of $P_2$.
  Then the permutation $zP_2t$ is square-free.

\end{enumerate}
\end{lemma}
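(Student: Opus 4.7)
The plan is to prove both parts by contradiction, treating part~1 in detail; part~2 will follow by the symmetric argument. Suppose $Q := xP_1y$ (of length $4m+1$) contains a square $XX'$ with $|X|=|X'|=k\ge 2$. Since $|Q|$ is odd and $2k$ is even, the square cannot occupy all of $Q$, so it cannot contain both $x$ and $y$; if it contained neither it would lie inside $P_1$, contradicting Corollary~\ref{Cor:Construction-1}. Since $x$ is at position~$1$, any factor containing $x$ starts at position~$1$, forcing $X = x\,p_1\dots p_{k-1}$ and $X' = p_kp_{k+1}\dots p_{2k-1}$; symmetrically $y\in X'$ forces the square to end at position $4m+1$ with $y$ as the last symbol of $X'$. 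I treat Case~A ($x$ in the square); Case~B is the mirror image.

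In Case~A, $x$ is the unique maximum of $X$ at position~$1$, so order-isomorphism forces $p_k$ to be the unique maximum of $X'$. Recall from Construction~$1$ that $p_i$ is lower-level when $i\equiv 1\pmod 4$, upper-level when $i\equiv 3\pmod 4$, and medium-level when $i$ is even. I split on $k\bmod 4$. If $k\not\equiv 3\pmod 4$, then $p_k$ is not upper, but a short residue check shows $[k,2k-1]$ always contains an index $j\equiv 3\pmod 4$ with $j\ne k$ (one verifies $k\equiv 0,1,2$ separately using $k\ge 2$); the upper symbol $p_j\in X'$ exceeds $p_k$ by (A1), contradicting maximality.

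The delicate sub-case is $k\equiv 3\pmod 4$, say $k=4s-1$ with $s\ge 1$. Here $p_k$ is legitimately upper, so the easy contradiction at position~$1$ fails. Instead, I compare $X[2] = p_1 = a_1$ (lower-level, since $1\equiv 1\pmod 4$) with $X'[2] = p_{k+1} = p_{4s}$ (medium-level, since $4s$ is even). A level count gives that $X$ consists of one $x$ on top, $s-1$ upper, $2s-1$ medium, and $s$ lower symbols, whereas $X'$ has $s$ upper, $2s-1$ medium, and $s$ lower symbols. Because all lower symbols of $X$ rank strictly below all its medium and upper symbols, the rank of $X[2]$ lies in $\{1,\dots,s\}$; because all medium symbols of $X'$ rank strictly above its lower and strictly below its upper symbols, the rank of $X'[2]$ lies in $\{s+1,\dots,3s-1\}$. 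These sets are disjoint for every $s\ge 1$, contradicting $X\sim X'$.

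Case~B is the mirror of Case~A: $y$ is the unique minimum of $X'$ at its last position, forcing $p_{4m-k}$ to be the unique minimum of $X$; splitting on $k\bmod 4$ (with the level of $p_{4m-k}$ determined by $-k\bmod 4$) and comparing at position $k-1$ rather than $2$ yields the analogous contradiction. For part~2, Construction~$2$ merely swaps the upper and lower levels of $P_2$ (placing them at indices $\equiv 1$ and $\equiv 3\pmod 4$ respectively), and $z,t$ play the roles of $y,x$; the argument transfers unchanged after these swaps. The main obstacle is the sub-case $k\equiv 3\pmod 4$, where prepending $x$ shifts the level pattern of $Q$ by one parity step relative to $P_1$, and only the disjointness of the \emph{rank intervals} of the lower level in $X$ versus the medium level in $X'$ detects the resulting mismatch.
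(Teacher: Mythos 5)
Your proof is correct, but it takes a genuinely different route from the paper's. The paper reduces to the single case in which the square contains $y$ (its ``without loss of generality'' rests on the reverse--complement symmetry of the construction), splits on the half-length $|X_1|\in\{2,3,\ge 4\}$, and for $|X_1|\ge 4$ invokes Proposition~\ref{P:H-M-L-Factors}: the last four symbols of $X_2$ form a $2341$-pattern, which no length-$4$ factor of $P_1$ can realize. You instead exploit the fact that the inserted symbol is an extreme value at an extreme position: order-isomorphism forces a specific position of the other half to carry the maximum (resp.\ minimum), which the level structure of Construction~1 forbids unless $k\equiv 3\pmod{4}$; that residual case you dispatch by comparing the levels, and hence the disjoint rank intervals $\{1,\dots,s\}$ versus $\{s+1,\dots,3s-1\}$, of the second symbols of the two halves. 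I checked the residue argument for $k\not\equiv 3\pmod{4}$ (the interval $[k,2k-1]$ of length $k\ge 2$ always contains an index $\equiv 3\pmod{4}$ distinct from $k$), the level counts for $k=4s-1$, and the mirrored Case~B; all are sound. What each approach buys: yours is self-contained and never needs Proposition~\ref{P:H-M-L-Factors}, at the cost of a residue analysis and a level count; the paper's is shorter because the $2341$-obstruction packages the relevant level information once and reuses it, handling only the two short lengths by hand.
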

\begin{proof}
1. Let $P=xP_1y$. Suppose that $P$ contains a square $X=X_1X_2$.
Let us show that $X$ contains exactly one symbol from the set $\{x,y\}$.
If $X$ contains the symbols $x$ and $y$, then $X=P$.
This contradicts the fact that the length of $P_1$ is odd.
On the other hand, by Corollary \ref{Cor:Construction-1}, $P_1$ is square-free.
Therefore, $X$ contains exactly one symbol from the set $\{x,y\}$. 
Without loss of generality, we can assume that $X$ contains the symbol $y$ and does not contain the symbol $x$.
Let us consider three cases depending on the length of $X_1$.

\textbf{Case 1: $|X_1|=2$.}
In this case, we have $X_1 \sim 12$ and $X_2 \sim 21$. 
Therefore, $X_1 \not\sim X_2$, that is, $X$ is not a square.

\textbf{Case 2: $|X_1|=3$.}
In this case, we have $X_1 \sim 321$ and $X_2 \sim 231$. 
Therefore, $X_1 \not\sim X_2$, that is, $X$ is not a square.

\textbf{Case 3: $|X_1| \geq 4$.}
The suffix of $X_2$ of length $4$ is order-isomorphic to $2341$.
On the other hand, $X_1$ is a factor of $P_1$ (because $X$ does not contain the symbol $x$).
Therefore, by Proposition \ref{P:H-M-L-Factors}, no factor of $X_1$ of length $4$ is order-isomorphic to $2341$.
Hence, $X_1 \not\sim X_2$, that is, $X$ is not a square.

2. The proof for the permutation $zP_2t$ is similar.
\end{proof}

\begin{lemma}\label{L:Special-Square-Free}
For any $m\geq 2$, there exists a square-free permutation $r_1r_2 \ldots r_{2m-1}$ of length $2m-1$ 
such that $r_2>r_3$ and $r_{2m-3}>r_{2m-2}$.
\end{lemma}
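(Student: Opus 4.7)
The plan is to construct the required permutation $R$ explicitly using Construction 2 (Corollary \ref{Cor:Construction-2}) together with, in one case, the modification step of Lemma \ref{L:Modifications}(2). The argument splits by the parity of $m$, and in both cases the boundary descents $r_2 > r_3$ and $r_{2m-3} > r_{2m-2}$ will follow directly from the inequalities between the three levels of a Construction 2 permutation (any upper symbol exceeds any medium symbol, and any medium symbol exceeds any lower symbol).

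For even $m = 2M$ with $M \geq 1$, the target length $2m - 1 = 4M - 1$ is exactly a Construction 2 length. I would apply Construction 2 with arbitrary permutations $U, W$ of length $M$ satisfying (B1) and with medium permutation $V$ chosen to be any square-free permutation of length $2M - 1 = m - 1$ (these exist by \cite{AKPV11}, and for $M = 1$ the length is $1$ and this is vacuous). Corollary \ref{Cor:Construction-2} then gives that $R$ is square-free. Since $R$ satisfies (\ref{Eq:1}) with $i = 3$, positions $\equiv 1 \pmod 4$ are upper, positions $\equiv 3 \pmod 4$ are lower, and even positions are medium. Hence position $2$ is medium while position $3$ is lower, giving $r_2 > r_3$; and position $2m - 3 = 4M - 3 \equiv 1 \pmod 4$ is upper while position $2m - 2$ is medium, giving $r_{2m - 3} > r_{2m - 2}$.

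For odd $m = 2M + 1$ with $M \geq 1$, the target length $2m - 1 = 4M + 1$ is two more than a Construction 2 length. I would first build a square-free Construction 2 permutation $P$ of length $4M - 1 = 2m - 3$ as above (with a medium permutation of length $2M - 1 = m - 2$, trivial when $M = 1$), and then apply Lemma \ref{L:Modifications}(2) by prepending a symbol $z$ smaller than every symbol of $P$ and appending a symbol $t$ larger than every symbol of $P$, obtaining $R = zPt$ of length $4M + 1 = 2m - 1$, which is square-free by that lemma. The relevant level-types (with positions of $R$ shifted by one from $P$) are: $r_2 = P_1$ is upper and $r_3 = P_2$ is medium, giving $r_2 > r_3$; and since $2m - 3 = 4M - 1$ and $2m - 2 = 4M$ correspond to positions $4M - 2$ (even, medium) and $4M - 1 \equiv 3 \pmod 4$ (lower) of $P$ respectively, we obtain $r_{2m - 3} > r_{2m - 2}$.

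The only delicate step is the level bookkeeping: one needs to confirm that the four boundary positions $2, 3, 2m-3, 2m-2$ land on the correct levels in each parity case, so that both descents come from the strict inequalities guaranteed by (B1). Square-freeness itself is handed to us by Corollary \ref{Cor:Construction-2} and Lemma \ref{L:Modifications}(2), and the edge cases $m \in \{2, 3\}$ (both with $M = 1$) require no separate treatment, since the medium permutation then has length $1$.
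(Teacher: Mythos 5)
Your proposal is correct, and the even case is identical to the paper's: both apply Construction 2 directly at length $4M-1$ and read off the two descents from the medium--lower and upper--medium level inequalities. The odd case is where you diverge. The paper builds a Construction 1 permutation $Q$ of the \emph{longer} length $4M+3$ and deletes its first and last symbols, so square-freeness of the result is immediate (a factor of a square-free permutation is square-free) and the boundary descents $c_1>b_2$ and $b_{2M+2}>a_{M+2}$ come from the levels of $Q$; you instead build a Construction 2 permutation of the \emph{shorter} length $4M-1$ and grow it to length $4M+1$ by prepending a global minimum and appending a global maximum, invoking Lemma \ref{L:Modifications}(2) for square-freeness. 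Your level bookkeeping checks out in both parities (including $m\in\{2,3\}$, where the medium word has length $1$), so the argument is sound; the trade-off is that the paper's route needs no lemma beyond Corollary \ref{Cor:Construction-1}, whereas yours leans on Lemma \ref{L:Modifications}(2) --- which is available in the paper anyway, and has the mild advantage of reusing Construction 2 uniformly in both cases.
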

\begin{proof}
Let us consider two cases depending on the parity of $m$.

Suppose that $m=2k$, where $k$ is a positive integer.
Let $P=p_1 \ldots p_{4k-1}$ be a permutation constructed by Construction $2$.
By Corollary \ref{Cor:Construction-2}, $P$ is square-free.
Since $P$ is constructed by Construction $2$, we have that $p_2>p_3$ and $p_{4k-3}>p_{4k-2}$
(we have $v_1>w_1$ and $u_k>v_{2k-1}$).
Thus, we have proved the statement for all even $m$.

Suppose that $m=2k+1$, where $k$ is a positive integer.
Let $Q$ be a permutation of length $4k+3$ constructed by Construction $1$.
Let $T=t_1 \ldots t_{4k+1}$ be the permutation obtained from $Q$ by removing the first and last symbols.
By Corollary \ref{Cor:Construction-1}, $Q$ is square-free.
Since $T$ is a factor of $Q$, $T$ is also square-free.
Since $Q$ is constructed by Construction $1$, we have that $t_2>t_3$ and $t_{4k-1}>t_{4k}$
(we have $c_1>b_2$ and $b_{2k}>a_{k+1}$).
Therefore, we have proved the statement for all odd $m$.
\end{proof}

\section{Main results}\label{Sec:Main}
The main result of this paper is the following.
\begin{theorem}
For any $m\geq 2$, there exists a $\{0,1,8m+4,8m+5\}$-crucial square-free permutation of length $8m+5$. 
\end{theorem}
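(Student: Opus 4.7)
The plan is to exhibit an explicit permutation of length $8m+5$ built from the high-medium-low machinery of Section~\ref{Sec:Prelim}, and then verify the four crucialness conditions by an explicit case split on the rank of the inserted symbol. Since the conditions at positions~$0$ and~$8m+5$ are mirror images of each other, and similarly for positions~$1$ and~$8m+4$, the argument is essentially symmetric under reversal.

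I expect $P$ to have the shape $x\,A\,M\,B\,y$, where $A$ is a Construction~2 block on the left and $B$ a Construction~1 block on the right (of length roughly $4m-1$ each), $M$ is a short connecting segment chosen so that $A\,M\,B$ is still a single instance of the high-medium-low construction, the medium level is supplied by Lemma~\ref{L:Special-Square-Free} (so in particular the key boundary inequalities $b_2>b_3$ and $b_{4m-1}>b_{4m}$ hold, where $b_i$ denotes the $i$-th medium-level symbol), and $x,y$ are extremal symbols chosen strictly smaller than and strictly larger than, respectively, everything inside. Square-freeness of $P$ then follows from Lemma~\ref{L:H-M-L} applied to the inner block together with Lemma~\ref{L:Modifications} for the outer symbols~$x$ and~$y$.

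For crucialness at positions~$0$ and~$8m+5$, I use the standard forced-square argument: any new square in an extension must contain the inserted symbol and hence forms a prefix (resp.\ suffix) of the extension; the extremality of $p_1=x$ collapses the order type of $v\,p_1\,p_2\,p_3$ to only a few possibilities, each of which matches an explicit length-$4$ square inside the prefix of $A$. For positions~$1$ and~$8m+4$, I insert $u$ between $x$ and the first symbol of $A$ and split on the rank of $u$ relative to the first few symbols of $A$, producing a length-$4$, length-$6$, or length-$8$ witness square in each case. The inequalities from Lemma~\ref{L:Special-Square-Free} are used here to kill the ``exceptional'' intermediate rank, the choice of Construction~2 (rather than Construction~1) at the left end ensures that the boundary order type at $p_1,p_2,p_3$ is compatible with this analysis, and Proposition~\ref{P:H-M-L-Factors} is invoked to discard would-be squares whose right half realises one of the forbidden length-$4$ patterns $2341,3214,4123,1432$.

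The main obstacle is the intermediate-rank case of the insertion at positions~$1$ and~$8m+4$. The boundary positions~$0$ and~$n$ are handled by the standard extremality trick, but at positions~$1$ and~$n-1$ a naive construction (for instance, $x\,P_1\,y$ with $P_1$ coming entirely from Construction~1) leaves some rank of the inserted symbol without a witness square. Plugging this loophole is exactly what forces the hybrid Construction~2/Construction~1 shape and the sharpened medium-level inequalities from Lemma~\ref{L:Special-Square-Free}; once they are in place, the remaining verification reduces to a finite, if somewhat tedious, case check which I expect to yield a length-$4$, $6$, or $8$ witness square in every subcase.
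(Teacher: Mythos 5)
Your proposal is a plan rather than a proof, and the two places where the real work lives are both left open. First, the permutation itself is never exhibited. Your shape $x\,A\,M\,B\,y$ --- a Construction~2 block and a Construction~1 block concatenated side by side through a short connector $M$ --- is not the paper's shape and is not obviously realizable: a single high-medium-low instance has one global phase $i$ in (\ref{Eq:1}), and gluing a Construction~2 block (phase $i=3$) to a Construction~1 block (phase $i=1$) forces congruence conditions on $|A|+|M|$ and a global interleaving of the value ranges of the six levels that you never check. The paper instead \emph{nests} the constructions: Construction~1 applied to $X_m,Y_m,Z_m$ (with $Y_m$ supplied by Lemma~\ref{L:Special-Square-Free}) produces $H'_m$, the padded permutation $H_m=(6m+3)H'_m(2m+3)$ is then fed in as the square-free \emph{medium level} $V$ of Construction~2, and only then are the extremal symbols $1$ and $8m+5$ appended. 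This vertical composition is what makes the boundary factors of $E_m$ have the very specific order types needed for the witness squares; your horizontal concatenation would have to be re-engineered from scratch to produce them.

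Second, the verification you defer is exactly the content of the theorem, and your stated expectation of it is wrong in a telling way: you predict a witness square of length $4$, $6$, or $8$ in every subcase, but in the paper the intermediate-rank subcase at position $8m+4$ (namely $q_{8m+4}<q_{8m+5}<q_{8m+3}$) requires a square of length $16$, built from the factor $Q[8m-10,8m+5]$, and the inequality $y_{2m-3}>y_{2m-2}$ from Lemma~\ref{L:Special-Square-Free} exists precisely to make the two halves of that length-$16$ factor order-isomorphic --- it does not ``kill'' the exceptional rank, it supplies its witness. Also, odd-length witnesses such as length~$6$ cannot occur (a square has even length), and even at positions $0$ and $8m+5$ a length-$4$ square does not suffice for every rank of the inserted symbol: the paper needs a length-$8$ square when the new last symbol exceeds $q_{8m+5}$. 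So the ``finite, somewhat tedious case check'' you postpone is not a routine afterthought; whether all the required squares coexist is the whole difficulty, and it is what dictates the construction. As written, the proposal does not establish the theorem.
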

\begin{proof}
Let $m\geq 2$.
We will construct a $\{0,1,8m+4,8m+5\}$-crucial square-free permutation of length $8m+5$ step by step.
In our construction we will use Lemma \ref{L:Special-Square-Free}, Constructions 1 and 2 and Lemma \ref{L:Modifications}.

By Lemma \ref{L:Special-Square-Free}, there exists a square-free permutation $R_m=r_1 \ldots r_{2m-1}$ of length $2m-1$ such that $r_2>r_3$ and $r_{2m-3}>r_{2m-2}$.
Let $Y_m=y_1 \ldots y_{2m-1}$ be a permutation of the set $[3m+4,5m+2]$ such that $Y_m\sim R_m$.
Since $Y_m\sim R_m$, we have that $Y_m$ is square-free, $y_2>y_3$ and $y_{2m-3}>y_{2m-2}$.
Let $X_m$ be a permutation of the set $[2m+4,3m+3]$ and let $Z_m$ be a permutation of the set $[5m+3,6m+2]$. 
For example, we can take $X_m=(3m+3)(3m+2)\ldots (2m+4)$ and $Z_m=(6m+2)(6m+1)\ldots (5m+3)$.
Applying Construction $1$ for $A=X_m$, $B=Y_m$ and $C=Z_m$, we construct a permutation $H'_m$ of length $4m-1$
(we can use Construction $1$ because $X_m$, $Y_m$ and $Z_m$ satisfy (A1) and (A2)).
Then, we define a permutation $H_m$ of length $4m+1$ by $H_m=(6m+3)H'_m(2m+3)$.
Using Lemma \ref{L:Modifications} for $x=6m+3$, $P_1=H'_m$ and $y=2m+3$, we obtain that $H_m$ is square-free. 
We define permutations $S_m$ and $T_m$ of length $2m+1$ by
$S_m=(6m+4)(6m+5)(6m+6)\ldots (8m+4)$ and $T_m=234\ldots (2m+2)$.  
Applying Construction $2$ for $U=S_m$, $V=H_m$ and $W=T_m$, we construct a permutation $E'_m$ of length $8m+3$
(we can use Construction $2$ because $S_m$, $H_m$ and $T_m$ satisfy (B1) and (B2)).
Finally, we define a permutation $E_m$ of length $8m+5$ by $E_m=1E'_m(8m+5)$ (see Example \ref{Ex:1} for $m=2$).
Using Lemma \ref{L:Modifications} for $z=1$, $P_2=E'_m$ and $t=8m+5$, we obtain that $E_m$ is square-free.

Let us prove that $E_m$ is $\{0,1,8m+4,8m+5\}$-crucial.
Suppose that $Q=q_1 \ldots q_{8m+6}$ is an extension of $E_m$ in position $i$, where $i \in \{0,1,8m+4,8m+5\}$.
Let us prove that $Q$ contains a square.
Let us consider four cases.

\textbf{Case 1: $i=8m+5$.}
Let us consider two subcases depending on the value of $q_{8m+6}$.

\textbf{Case 1.1: $q_{8m+6}>q_{8m+5}$.}
Since $Q$ is an extension of $E_m$ in position $8m+5$, we have
$Q[8m-1,8m+2] \sim y_{2m-1}(2m+1)z_m(8m+4)$ and
$Q[8m+3,8m+5] \sim (2m+3)(2m+2)(8m+5)$. 
In addition, we have $3m+4 \leq y_{2m-1}<z_m \leq 6m+2$ and $q_{8m+6}>q_{8m+5}$.
Therefore, the permutations $Q[8m-1,8m+2]$ and $Q[8m+3,8m+6]$ are order-isomorphic.
Hence, the suffix of $Q$ of length $8$ is a square.

\textbf{Case 1.2: $q_{8m+6}<q_{8m+5}$.}
Since $Q$ is an extension of $E_m$ in position $8m+5$, we have
$q_{8m+3}q_{8m+4} \sim (2m+3)(2m+2)$.
On the other hand, we have $q_{8m+5}>q_{8m+6}$.
Therefore, the suffix of $Q$ of length $4$ is a square.

\textbf{Case 2: $i=8m+4$.}
Let us consider four subcases depending on the value of $q_{8m+5}$.

\textbf{Case 2.1: $q_{8m+5}>q_{8m+6}$.}
Since $Q$ is an extension of $E_m$ in position $8m+4$, we have
$q_{8m+3}q_{8m+4} \sim (2m+3)(2m+2)$.
In addition, we have $q_{8m+5}>q_{8m+6}$.
Therefore, the suffix of $Q$ of length $4$ is a square.

\textbf{Case 2.2: $q_{8m+5}<q_{8m+4}$.}
Since $Q$ is an extension of $E_m$ in position $8m+4$, we have
$q_{8m+2}q_{8m+3} \sim (8m+4)(2m+3)$.
In addition, we have $q_{8m+4}>q_{8m+5}$.
Therefore, the permutation $Q[8m+2,8m+5]$ is a square.

\textbf{Case 2.3: $q_{8m+3}<q_{8m+5}<q_{8m+6}$.}
Since $Q$ is an extension of $E_m$ in position $8m+4$, we have
$Q[8m-1,8m+2] \sim y_{2m-1}(2m+1)z_m(8m+4)$ and
$q_{8m+3}q_{8m+4}q_{8m+6} \sim (2m+3)(2m+2)(8m+5)$. 
In addition, we have $3m+4 \leq y_{2m-1}<z_m \leq 6m+2$ and $q_{8m+3}<q_{8m+5}<q_{8m+6}$.
Therefore, the permutations $Q[8m-1,8m+2]$ and $Q[8m+3,8m+6]$ are order-isomorphic.
Hence, the suffix of $Q$ of length $8$ is a square.

\textbf{Case 2.4: $q_{8m+4}<q_{8m+5}<q_{8m+3}$.}
Since $Q$ is an extension of $E_m$ in position $8m+4$, we have
$$Q[8m-10,8m-3] \sim (8m+1)y_{2m-3}(2m-1)z_{m-1}(8m+2)y_{2m-2}(2m)x_m$$ and
$$Q[8m-2,8m+4] \sim (8m+3)y_{2m-1}(2m+1)z_m(8m+4)(2m+3)(2m+2).$$
In addition, we have $y_{2m-3}>y_{2m-2}$ and $q_{8m+4}<q_{8m+5}<q_{8m+3}$.
Therefore, the permutations $Q[8m-10,8m-3]$ and $Q[8m-2,8m+5]$ are order-isomorphic.
Hence, $Q$ contains a square of length $16$.

\textbf{Case 3: $i=0$.} The proof for this case is similar to the proof for the case $i=8m+5$.

\textbf{Case 4: $i=1$.} The proof for this case is similar to the proof for the case $i=8m+4$.
\end{proof}

In the following example we discuss in detail our main construction for $m=2$.

\begin{example}\label{Ex:1}
There is a unique permutation $Y_2=y_1y_2y_3$ of the set $[10,12]$ such that $y_2>y_3$ and $y_1>y_2$.
Namely, we have $Y_2=(12)(11)(10)$.
Since $X_2$ is a permutation of the set $[8,9]$ and $Z_2$ is a permutation of the set $[13,14]$, 
we can take $X_2=98$ and $Z_2=(14)(13)$.
Construction $1$ for $A=X_2$, $B=Y_2$ and $C=Z_2$ gives the permutation $H'_2=9(12)(14)(11)8(10)(13)$ of length $7$.
Then, we define a permutation $H_2$ of length $9$ by $H_2=(15)H'_2 7$.
Therefore, we have $H_2=(15)9(12)(14)(11)8(10)(13)7$ (see Figure \ref{F:1}).
We define permutations $S_2$ and $T_2$ of length $5$ by
$S_2=(16)(17)(18)(19)(20)$ and $T_2=23456$.
Construction $2$ for $U=S_2$, $V=H_2$ and $W=T_2$ gives the permutation 
$E'_2=(16)(15)29(17)(12)3(14)(18)(11)48(19)(10)5(13)(20)76$ of length $19$.
Finally, we define a permutation $E_2$ of length $21$ by $E_2=1E'_2(21)$.
Therefore, we have $E_2=1(16)(15)29(17)(12)3(14)(18)(11)48(19)(10)5(13)(20)76(21)$ (see Figure \ref{F:2}).
\end{example}

\begin{figure}[h!]
\centering
\begin{tikzpicture}[scale=0.65]
    \node (A1) at (1,9) [circle,fill,inner sep=1.5pt,label=above left:15] {};
    \node (A2) at (2,3) [circle,fill=red,inner sep=1.5pt,label=below left:9] {};
    \node (A3) at (3,6) [circle,fill=green,inner sep=1.5pt,label=above left:12] {};
    \node (A4) at (4,8) [circle,fill=blue,inner sep=1.5pt,label=above left:14] {};
    \node (A5) at (5,5) [circle,fill=green,inner sep=1.5pt,label=above left:11] {};
    \node (A6) at (6,2) [circle,fill=red,inner sep=1.5pt,label=below left:8] {};
    \node (A7) at (7,4) [circle,fill=green,inner sep=1.5pt,label=above left:10] {};
    \node (A8) at (8,7) [circle,fill=blue,inner sep=1.5pt,label=above left:13] {};
    \node (A9) at (9,1) [circle,fill,inner sep=1.5pt,label=below left:7] {};

    \draw (A1) -- (A2);
    \draw (A2) -- (A3);
    \draw (A3) -- (A4);
    \draw (A4) -- (A5);
    \draw (A5) -- (A6);
    \draw (A6) -- (A7);
    \draw (A7) -- (A8);
    \draw (A8) -- (A9);
\end{tikzpicture}
\caption{A diagram of the permutation $H_2$. 
The symbols of the permutations $X_2$, $Y_2$ and $Z_2$ are highlighted in red, green and blue respectively.}
\label{F:1}
\end{figure}
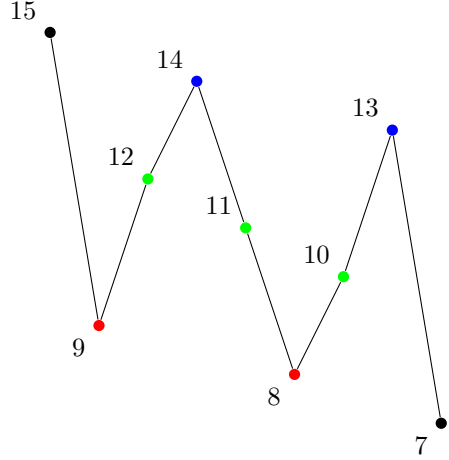

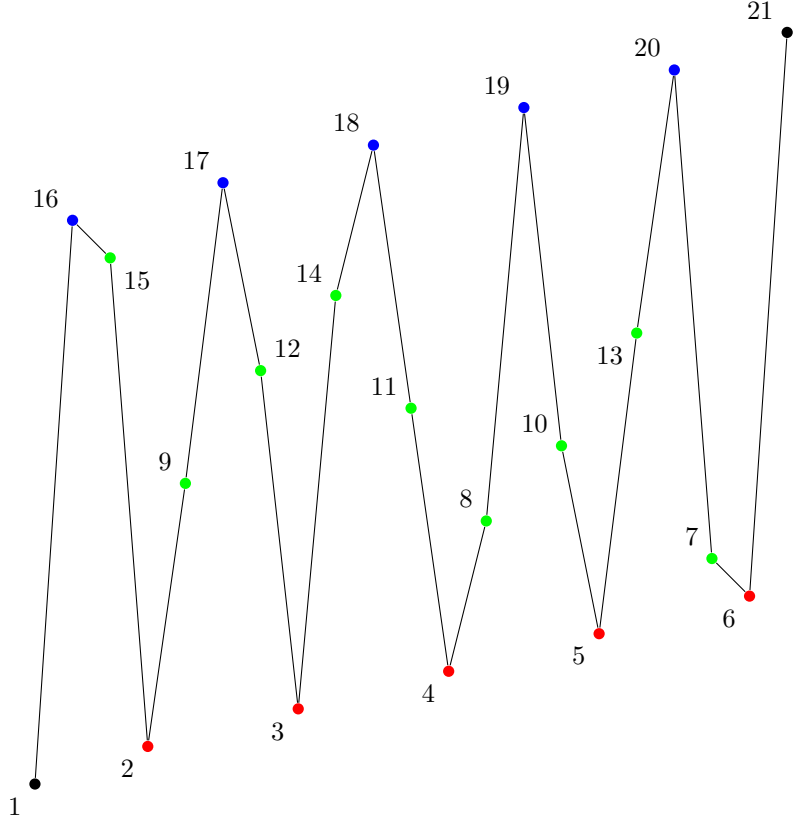
\begin{figure}[h!]
\centering
\begin{tikzpicture}[scale=0.5]
    \node (A1) at (1,1) [circle,fill,inner sep=1.5pt,label=below left:1] {};
    \node (A2) at (2,16) [circle,fill=blue,inner sep=1.5pt,label=above left:16] {};
    \node (A3) at (3,15) [circle,fill=green,inner sep=1.5pt,label=below right:15] {};
    \node (A4) at (4,2) [circle,fill=red,inner sep=1.5pt,label=below left:2] {};
    \node (A5) at (5,9) [circle,fill=green,inner sep=1.5pt,label=above left:9] {};
    \node (A6) at (6,17) [circle,fill=blue,inner sep=1.5pt,label=above left:17] {};
    \node (A7) at (7,12) [circle,fill=green,inner sep=1.5pt,label=above right:12] {};
    \node (A8) at (8,3) [circle,fill=red,inner sep=1.5pt,label=below left:3] {};
    \node (A9) at (9,14) [circle,fill=green,inner sep=1.5pt,label=above left:14] {};
    \node (A10) at (10,18) [circle,fill=blue,inner sep=1.5pt,label=above left:18] {};
    \node (A11) at (11,11) [circle,fill=green,inner sep=1.5pt,label=above left:11] {};
    \node (A12) at (12,4) [circle,fill=red,inner sep=1.5pt,label=below left:4] {};
    \node (A13) at (13,8) [circle,fill=green,inner sep=1.5pt,label=above left:8] {};
    \node (A14) at (14,19) [circle,fill=blue,inner sep=1.5pt,label=above left:19] {};
    \node (A15) at (15,10) [circle,fill=green,inner sep=1.5pt,label=above left:10] {};
    \node (A16) at (16,5) [circle,fill=red,inner sep=1.5pt,label=below left:5] {};
    \node (A17) at (17,13) [circle,fill=green,inner sep=1.5pt,label=below left:13] {};
    \node (A18) at (18,20) [circle,fill=blue,inner sep=1.5pt,label=above left:20] {};
    \node (A19) at (19,7) [circle,fill=green,inner sep=1.5pt,label=above left:7] {};
    \node (A20) at (20,6) [circle,fill=red,inner sep=1.5pt,label=below left:6] {};
    \node (A21) at (21,21) [circle,fill,inner sep=1.5pt,label=above left:21] {};
    \draw (A1) -- (A2);
    \draw (A2) -- (A3);
    \draw (A3) -- (A4);
    \draw (A4) -- (A5);
    \draw (A5) -- (A6);
    \draw (A6) -- (A7);
    \draw (A7) -- (A8);
    \draw (A8) -- (A9);
    \draw (A9) -- (A10);
    \draw (A10) -- (A11);
    \draw (A11) -- (A12);
    \draw (A12) -- (A13);
    \draw (A13) -- (A14);
    \draw (A14) -- (A15);
    \draw (A15) -- (A16);
    \draw (A16) -- (A17);
    \draw (A17) -- (A18);
    \draw (A18) -- (A19);
    \draw (A19) -- (A20);
    \draw (A20) -- (A21);
\end{tikzpicture}
\caption{A diagram of the permutation $E_2$.
The symbols of the permutations $S_2$, $H_2$ and $T_2$ are highlighted in blue, green and red respectively.}
\label{F:2}
\end{figure}

\pagebreak

\section{S-crucial and $P$-crucial permutations}\label{Sec:S-crucial}
In this section, we discuss the interconnection between S-crucial and $\{0,1,n-1,n\}$-crucial square-free permutations.

Firstly, we discuss one observation about S-crucial and $\{0,1,n-1,n\}$-crucial square-free permutations from \cite{GKKLN15}
(see the first paragraph of Section 2.5 in \cite{GKKLN15}).
It is obvious that any S-crucial square-free permutation of length $n$ is $\{0,1,n-1,n\}$-crucial.
Gent et al. \cite{GKKLN15} claimed that the opposite is also true.
Their proof was based on the following fact:

\begin{itemize}
  \item Let $P$ be a square-free permutation of length $n$.
For every $i\in [2,n-2]$, any extension of $P$ in position $i$ contains a square of length $4$.
\end{itemize}

Let us show that this statement is not true for $i=2$ and $i=n-2$.
Let us consider the permutation $M=135426$. Note that $M$ is square-free.
The permutation $M'=1465237$ is an extension of $M$ in position $4$.
In addition, $M'$ is square-free ($M'$ can be constructed by Construction 1).
Therefore, the statement is not true for $i=n-2$. The counterexample for $i=2$ is similar.
On the other hand, the statement is true for all $i\in [3,n-3]$.
Therefore, the observation can be easily corrected as follows.

\begin{proposition}
A square-free permutation of length $n$, where $n\geq 5$, is S-crucial if and only if it is $\{0,1,2,n-2,n-1,n\}$-crucial.
\end{proposition}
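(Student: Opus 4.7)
The forward implication is immediate, since the set $\{0,1,2,n-2,n-1,n\}$ is contained in $\{0,1,\ldots,n\}$. For the converse, it suffices to establish the following \emph{key claim}: for every square-free permutation $P$ of length $n$ and every $i \in [3,n-3]$, every extension of $P$ at position $i$ contains a square of length $4$. Granted this, if $P$ is $\{0,1,2,n-2,n-1,n\}$-crucial then every extension at every position $i \in \{0,1,\ldots,n\}$ contains a square: either by hypothesis (when $i$ is in the boundary set) or automatically by the key claim (when $i \in [3,n-3]$), and so $P$ is S-crucial.

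To prove the key claim, I exploit the level structure afforded by (\ref{Eq:1}). Define the comparison sequence of $P$ by $\alpha_k = u$ if $p_k < p_{k+1}$ and $\alpha_k = d$ otherwise. Since $P$ is square-free, (\ref{Eq:1}) forces $\alpha_1 \alpha_2 \cdots \alpha_{n-1}$ to be a substring of the period-$4$ word $\ldots uudduudd \ldots$. Observe that a length-$4$ factor $p_j p_{j+1} p_{j+2} p_{j+3}$ of any permutation is a square exactly when $\alpha_j = \alpha_{j+2}$. Define the analogous sequence $\beta$ for an extension $Q$ of $P$ at position $i$; by order-isomorphism we have $\beta_k = \alpha_k$ for $k < i$ and $\beta_k = \alpha_{k-1}$ for $k > i+1$, while $\beta_i$ and $\beta_{i+1}$ are determined by where the new symbol $q_{i+1}$ is placed.

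For $i \in [3,n-3]$, all four length-$4$ factors $Q[j,j+3]$ with $j \in \{i-2, i-1, i, i+1\}$ that meet the inserted position fit inside $Q$, and all of $\alpha_{i-2}, \alpha_{i-1}, \alpha_{i+1}, \alpha_{i+2}$ are well defined. Requiring none of these four factors to be a square translates into
\[
\beta_i \neq \alpha_{i-2}, \quad \beta_{i+1} \neq \alpha_{i-1}, \quad \beta_i \neq \alpha_{i+1}, \quad \beta_{i+1} \neq \alpha_{i+2}.
\]
The window $\alpha_{i-2}\alpha_{i-1}\alpha_i\alpha_{i+1}\alpha_{i+2}$ is one of the four cyclic shifts of $uudd$, namely $uuddu$, $udduu$, $dduud$, or $duudd$. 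In $uuddu$ (resp.\ $dduud$) the first and third inequalities force $\beta_i$ to be both $u$ and $d$; in $udduu$ (resp.\ $duudd$) the second and fourth inequalities force $\beta_{i+1}$ to be both $u$ and $d$. In every phase a contradiction arises, so at least one of the four overlapping length-$4$ factors of $Q$ is a square.

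The only non-trivial step is this four-phase case check, and each case is a one-line contradiction once the constraints are written down. The restriction $i \in [3,n-3]$ is sharp: for $i \in \{2, n-2\}$ one of the four overlapping length-$4$ factors falls outside $Q$, suppressing one of the four inequalities, and this is precisely the loophole exploited by the counterexample $M = 135426$, $M' = 1465237$ given before the proposition.
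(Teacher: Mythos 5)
Your proposal is correct and follows the same route as the paper: reduce S-cruciality to the boundary positions via the key claim that any insertion into a square-free permutation at a position $i\in[3,n-3]$ creates a square of length $4$. The paper merely asserts this claim (as the corrected version of the observation of Gent et al.), whereas you supply a complete and correct verification of it via the up--down sequence and the four-phase case check, so your argument is, if anything, more detailed than the paper's.
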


Secondly, we discuss Theorem 7 from \cite{GKKLN15}.
In the proof of Theorem 7, Gent et al. claimed that the permutation $P=24315(11)(10)69(12)87(13)(17)(15)(14)(16)$ of length $17$
is S-crucial square-free. In fact, $P$ is $\{0,1,16,17\}$-crucial.
Let us show that $P$ is not S-crucial.
Let us consider the permutation $Q=24315(11)(10)69(12)87(13)(17)(15)0(14)(16)$ of length $18$.
Note that $Q$ is an extension of $P$ in position $15$.
In addition, $Q$ is square-free (it is enough to check all factors of $Q$ of length $8$ or $16$ containing $0$). 
Therefore, $P$ is not S-crucial. 
Thus, it is not obvious that there exist S-crucial square-free permutations of length $17$.
On the other hand, we know that there are no S-crucial square-free permutations of length at most $16$.
In this context, it is interesting to consider the following problem.

\begin{problem}
Are there S-crucial square-free permutations of length at least $17$?
\end{problem}

Finally, we discuss our main construction in the context of S-crucial permutations.
We have proved that the permutation $E_m$ is $\{0,1,8m+4,8m+5\}$-crucial square-free.
Note that $E_m$ is not S-crucial.
Indeed, let $Q$ be the permutation obtained from $E_m$ by inserting the symbol $0$ in position $8m+3$.
It is not difficult to prove that $Q$ is square-free.
Therefore, $E_m$ is not S-crucial.
On the other hand, any extension of $E_m$ in position $i$, where $i\in [3,8m+2]$, contains a square of length $4$.

\end{document}